\begin{document}

\newtheorem{theorem}{Theorem}
\newtheorem{lemma}[theorem]{Lemma}
\newtheorem{claim}[theorem]{Claim}
\newtheorem{cor}[theorem]{Corollary}
\newtheorem{prop}[theorem]{Proposition}
\newtheorem{definition}{Definition}
\newtheorem{question}[theorem]{Question}
\newcommand{\hh}{{{\mathrm h}}}

\def\sssum{\mathop{\sum\!\sum\!\sum}}
\def\ssum{\mathop{\sum\, \sum}}

\def\squareforqed{\hbox{\rlap{$\sqcap$}$\sqcup$}}
\def\qed{\ifmmode\squareforqed\else{\unskip\nobreak\hfil
\penalty50\hskip1em\null\nobreak\hfil\squareforqed
\parfillskip=0pt\finalhyphendemerits=0\endgraf}\fi}

\newfont{\teneufm}{eufm10}
\newfont{\seveneufm}{eufm7}
\newfont{\fiveeufm}{eufm5}
%
%
\newfam\eufmfam
     \textfont\eufmfam=\teneufm
\scriptfont\eufmfam=\seveneufm
     \scriptscriptfont\eufmfam=\fiveeufm
%
%
\def\frak#1{{\fam\eufmfam\relax#1}}

\def\fR{\mathfrak R}
\def\fT{\mathfrak{T}}

\def\fA{{\mathfrak A}}
\def\fB{{\mathfrak B}}
\def\fC{{\mathfrak C}}

\def\eqref#1{(\ref{#1})}

\def\vec#1{\mathbf{#1}}

\def\squareforqed{\hbox{\rlap{$\sqcap$}$\sqcup$}}
\def\qed{\ifmmode\squareforqed\else{\unskip\nobreak\hfil
\penalty50\hskip1em\null\nobreak\hfil\squareforqed
\parfillskip=0pt\finalhyphendemerits=0\endgraf}\fi}

\def\cA{{\mathcal A}}
\def\cB{{\mathcal B}}
\def\cC{{\mathcal C}}
\def\cD{{\mathcal D}}
\def\cE{{\mathcal E}}
\def\cF{{\mathcal F}}
\def\cG{{\mathcal G}}
\def\cH{{\mathcal H}}
\def\cI{{\mathcal I}}
\def\cJ{{\mathcal J}}
\def\cK{{\mathcal K}}
\def\cL{{\mathcal L}}
\def\cM{{\mathcal M}}
\def\cN{{\mathcal N}}
\def\cO{{\mathcal O}}
\def\cP{{\mathcal P}}
\def\cQ{{\mathcal Q}}
\def\cR{{\mathcal R}}
\def\cS{{\mathcal S}}
\def\cT{{\mathcal T}}
\def\cU{{\mathcal U}}
\def\cV{{\mathcal V}}
\def\cW{{\mathcal W}}
\def\cX{{\mathcal X}}
\def\cY{{\mathcal Y}}
\def\cZ{{\mathcal Z}}
\newcommand{\rmod}[1]{\: \mbox{mod} \: #1}

\def\vr{\mathbf r}

\def\e{{\mathbf{\,e}}}
\def\ep{{\mathbf{\,e}}_p}
\def\em{{\mathbf{\,e}}_m}

\def\Tr{{\mathrm{Tr}}}
\def\Nm{{\mathrm{Nm}}}

 \def\SS{{\mathbf{S}}}

\def\lcm{{\mathrm{lcm}}}

\def\({\left(}
\def\){\right)}
\def\fl#1{\left\lfloor#1\right\rfloor}
\def\rf#1{\left\lceil#1\right\rceil}

\def\mand{\qquad \mbox{and} \qquad}

         \newcommand{\comm}[1]{\marginpar{\vskip-\baselineskip
         \raggedright\footnotesize
\itshape\hrule\smallskip#1\par\smallskip\hrule}}




\hyphenation{re-pub-lished}

\parskip 4pt plus 2pt minus 2pt

\mathsurround=1pt

\def\bfdefault{b}
\overfullrule=5pt

\def \F{{\mathbb F}}
\def \K{{\mathbb K}}
\def \Z{{\mathbb Z}}
\def \Q{{\mathbb Q}}
\def \R{{\mathbb R}}
\def \C{{\\mathbb C}}
\def\Fp{\F_p}
\def \fp{\Fp^*}

\title[Linear Equations with Rational Fractions]
{Linear Equations with Rational Fractions of Bounded Height and Stochastic 
Matrices}

 \author[I. E. Shparlinski] {Igor E. Shparlinski}

\address{Department of Pure Mathematics, University of New South Wales,
Sydney, NSW 2052, Australia}
\email{igor.shparlinski@unsw.edu.au}


\begin{abstract}  
We obtain a tight, up to a logarithmic factor, upper bound
on the number of solutions to the equation 
$$
\sum_{j=1}^n a_j \frac{s_j}{r_j} =a_0, \qquad  
$$
with variables $r_1,\ldots,r_n$ in an arbitrary 
box at the origin and variables $s_1,\ldots, s_n$
in an essentially arbitrary translation of this box.
We apply this result to get an upper bound on the number 
of stochastic matrices with rational entries of bounded height. 
\end{abstract}

\keywords{Stochastic 
matrices, Farey fractions, linear equations, linear  congruences,   exponential sums}
\subjclass[2010]{11C20,  11D45, 11L07, 15B51}

\maketitle

 \section{Introduction}

 \subsection{Motivation} 
 
 We recall that a {\it stochastic 
matrix\/} is a square $n \times n$ matrix $A=(\alpha_{i,j})_{i,j=1}^n$
with nonnegative entries and such that 
\begin{equation}
\label{eq:Stoch Mat}
\sum_{j=1}^n \alpha_{i,j} =1, \qquad i = 1, \ldots, n.
\end{equation}

Furthermore, as usual, for a rational number $\alpha$ 
we define its height $\hh(\alpha)$  as 
$\max\{|s|, r\}$ where the integers $r,s \in \Z$
are uniquely defined by the conditions
$\alpha = s/r$,  $\gcd(r,s) = 1$,  $r \ge 1$. 

We now define $S_n(H)$ as the number of stochastic 
$n \times n$ matrices with rational entries of height at most 
$H$, that is, with entries from the set 
$$
\cF(H)  = \{\alpha\in \cQ  : \alpha \ge 0, \ \hh(\alpha) \le H\}.
$$ 
In particular, $\cF(H) \cap [0,1]$ is the classical 
set of {\it Farey fractions\/}  of order $H$.

The question of estimating of $S_n(H)$ seems to be quite 
natural, however it has never  been addressed in the literature. 
Since the conditions~\eqref{eq:Stoch Mat} are independent, we
clearly have 
\begin{equation}
\label{eq:S and L}
S_n(H) = L_n(H)^n, 
\end{equation}
where $L_n(H)$ is the number of solutions to the
linear equation 
$$
\sum_{j=1}^n \alpha_{j} =1, \qquad \alpha_j \in \cF(H), \ j =1, \ldots, n,
$$
which we also write as 
\begin{equation}
\label{eq:Lin Eq 1}
\sum_{j=1}^n  \frac{s_j}{r_j} =1 \qquad   H \ge r_j > s_j \ge 0,
\ \gcd(r_j,s_j) = 1, \ j =1, \ldots, n.  
\end{equation}

We remark that each solution to~\eqref{eq:Lin Eq 1} leads to an 
integer  solution 
to 
\begin{equation}
\label{eq:Lin Eq Hom}
\sum_{j=1}^n  \frac{m_j}{k_j} =0, \qquad   |k_j|, |m_j| \le K,
\end{equation}
with $K = 2H$. Indeed, it is enough to set $(k_j,m_j)=(r_j,s_j)$, $j=1, \ldots, n-1$, 
and $(k_n, m_n) = (r_n, s_n-r_n)$. Furthermore, distinct solutions to~\eqref{eq:Lin Eq 1} 
yields distinct solutions to~\eqref{eq:Lin Eq Hom}.  

We now recall that for $n=3$, the result of Blomer, Br{\" u}dern 
and Salberger~\cite{BBS}
gives an asymptotic formula  $K^3Q(K) + O(K^{3-\delta})$
for the number of solutions to~\eqref{eq:Lin Eq Hom}, where  
 $Q \in \Q[X]$ is a polynomial of degree $4$ and $\delta > 0$ 
is some absolute constant. 
This immediately implies the bound
$$
L_3(H) = O(H^3 (\log H)^4). 
$$
It is also noted in~\cite{BBS} that the same 
method is likely to work  for any $n$, it may also work for the 
equation~\eqref{eq:Lin Eq 1} 
directly and give an asymptotic formula for $L_n(H)$.
The more elementary approach of  Blomer and Br{\" u}dern~\cite{BlBr}
can probably be used to get an upper bound on $L_n(H)$ of the 
right order of magnitude. 
However working out the  above approaches 
from~\cite{BlBr,BBS} in full 
detail may require significant efforts.  Here we suggest an alternative way to
estimate $L_n(H)$ via modular reduction modulo an appropriate prime and bounds on some double exponential 
sums with rational fractions. Although the bound obtained via this approach does not reach the same strength 
as the hypothetical results that can be derived via the methods of 
 Blomer, Br{\" u}dern and Salberger~\cite{BBS}
or  Blomer and Br{\" u}dern~\cite{BlBr},  it is weaker only by a 
 power of a logarithm. 
 
 Furthermore, the suggested here method seems to be more robust and also applies to 
 more general equations, see~\eqref{eq:Lin Eq a} below. 
 Besides it also works for variables in distinct intervals and 
 not necessary at the origin.  However it is unable to produce asymptotic 
 formulas.  

Clearly, to estimate  $S_n(H)$ 
one can 
use general bounds for the number of integral points
on hypersurfaces, see, for example,~\cite{Marm} and references therein,
however they do no reach the strength our results.

\subsection{Main results}

We 
derive the bounds $L_n(H)$ and thus on $S_n(H)$ from a bound on 
the number of solutions of 
a much more general linear equation. 

Namely, we consider two boxes  of the form
\begin{equation}
\label{eq:box B0}
 \fB_0 = [1, H_1]\times \ldots \times [1, H_n]
\end{equation}
and 
\begin{equation}
\label{eq:box B}
 \fB =[B_1+1,B_1+H_1]\times \ldots \times [B_n+1, B_n+H_n],
\end{equation}
with arbitrary integers  $A_j$, $B_j$ and 
positive integers  $H_j$, $i=1, \ldots, n$. 
We note that the boxes $\fB_0$ and $\fB$ are of the same dimensions 
but  $\fB_0$ is positioned at the origin, while $\fB$ can be at an 
arbitrary location in $\R^n$.

Namely, for a vector  a vector
 $\vec{a} = (a_0, a_1, \ldots, a_n) \in \Z^{n+1}$
we use  $N_n(\vec{a}; \fB_0,\fB)$ to denote the number of solutions to the
equation 
\begin{equation}
\label{eq:Lin Eq a}
\sum_{j=1}^n a_j \frac{s_j}{r_j} =a_0, \qquad  
\qquad 
(r_1,\ldots,r_n) \in \fB_0, \ 
(s_1,\ldots,s_n) \in \fB.
\end{equation}
We remark that we have droped the condition of co-primality of the variables,
which only increases the number of solutions, but does not affect our 
main results.

Throughout the paper, any implied constants in the symbols $O$,
$\ll$  and $\gg$  may depend on the real  parameter $\varepsilon> 0$ and
the integer parameter $n\ge 1$. We recall that the
notations $U = O(V)$, $U \ll V$ and  $V \gg U$ are all equivalent to the
statement that the inequality $|U| \le c V$ holds with some
constant $c> 0$.

\begin{theorem}
\label{thm:NnaH} 
Let  $\fB$ and $\fB_0$ be two boxes of the form~\eqref{eq:box B0}
and~\eqref{eq:box B}, respectively, with $\max_{j=1, \ldots, n} H_j = H$. 
For any vector $\vec{a} = (a_0, a_1, \ldots, a_n) \in \Z^{n+1}$
with $1 \le |a_i| \le \exp\(H^{O(1)}\)$, $i=1, \ldots,n$, we
have 
$$
N_n(\vec{a}; \fB_0,\fB)\le H_1\ldots H_n (\log H)^{2^n - 1 + o(1)}.
$$
\end{theorem}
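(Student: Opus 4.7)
The strategy is the one suggested in the introduction: reduce the equation over $\Z$ to a congruence modulo an appropriate prime, apply orthogonality to express the count via exponential sums with rational fractions, and bound these sums. First, I would select a prime $p$ with $p \asymp H_1 \cdots H_n (\log H)^A$ for a constant $A = A(n)$, satisfying $p > H$ and $p \nmid a_j$ for $j = 1, \ldots, n$. Such a $p$ exists: the prime number theorem supplies $\gg p/\log p$ primes at scale $p$, while the hypothesis $|a_j| \le \exp(H^{O(1)})$ forces $\prod_j |a_j|$ to have at most $H^{O(1)}$ prime factors. Since $p > H$, every $r_j \in [1, H_j]$ is invertible modulo $p$, and any $\Z$-solution of~\eqref{eq:Lin Eq a} descends to a solution of $\sum_j a_j s_j r_j^{-1} \equiv a_0 \pmod p$.

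By orthogonality of additive characters,
\begin{equation*}
N_n(\vec a; \fB_0, \fB) \le \frac{1}{p}\sum_{\lambda=0}^{p-1} \ep(-\lambda a_0)\prod_{j=1}^n T_j(\lambda), \qquad T_j(\lambda) = \sum_{r=1}^{H_j}\sum_{s=B_j+1}^{B_j+H_j}\ep(\lambda a_j s r^{-1}).
\end{equation*}
The $\lambda = 0$ term contributes $(H_1 \cdots H_n)^2/p \le H_1 \cdots H_n (\log H)^{-A}$, already well within the target bound. For $\lambda \ne 0$, summing the inner geometric progression in $s$ gives the standard estimate $|T_j(\lambda)| \ll V_j(\lambda)$ where
\begin{equation*}
V_j(\lambda) = \sum_{r=1}^{H_j}\min\!\left(H_j,\; \frac{1}{2\|\lambda a_j r^{-1}/p\|}\right)
\end{equation*}
and $\|\cdot\|$ denotes the distance to the nearest integer. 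The remaining task is to bound $\sum_{\lambda \ne 0}\prod_{j=1}^n V_j(\lambda)$. I would proceed via iterated Cauchy--Schwarz over the variable $\lambda$, reducing products of $V_j$'s to higher moments of individual $V_j$'s, and then estimate each moment using a dyadic decomposition of the $\min$ function together with divisor-type bounds controlling how often the residues $\lambda a_j r^{-1} \pmod p$ cluster close to $0$ in $\F_p$.

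\textbf{Main obstacle.} The crux is arranging the moment estimates so as to produce the claimed logarithmic exponent $2^n - 1$. The natural induction on $n$ sets up a recursion $g_n = 2 g_{n-1} + 1$ with base case $g_1 = 1$: each Cauchy--Schwarz step roughly doubles the previous log exponent, and the divisor-type bound contributes the additive $+1$ that absorbs the clustering near $0$. The solution $g_n = 2^n - 1$ then emerges, and the residual $o(1)$ loss accounts for the $(\log \log H)^{O(1)}$ factors arising in the divisor-sum estimates. Carrying this recursion out cleanly, while keeping the dependence on the $a_j$ and $B_j$ acceptable, is the main technical hurdle.
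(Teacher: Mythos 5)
Your overall architecture (reduction modulo a large prime $p>H_1\cdots H_n$ coprime to the $a_j$, orthogonality, completion of the $s$-sum to $\min(H_j,\|\lambda a_j r^{-1}/p\|^{-1})$, then H\"older and divisor-type counting) is the right one and runs parallel to the paper; the one structural difference is that you keep a single prime and average over $\lambda$, whereas the paper averages the whole congruence count over a family of primes $p\in[Q,2Q]$, applies H\"older in $p$ for each fixed $\lambda$ (Lemma~\ref{lem:Sum Triple}), and finishes by pigeonhole. Either averaging can in principle supply the needed cancellation; but an averaging of this kind, fed into a Shiu/Nair--Tenenbaum-type mean-value theorem, is the entire content of the proof, and this is exactly the part your proposal leaves as a black box. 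Concretely: for $\lambda\ne 0$ the count $\#\{r\le H_j:\ \lambda a_j r^{-1}\equiv z \pmod p,\ e^t\le|z|<e^{t+1}\}$ is bounded by $\sum_{|k|\le K_t}\Delta(|\mu+pk|)$ with $\mu=\lambda a_j \bmod p$ and $K_t\ll H_je^t/p+1$, where $\Delta$ is Hooley's function, and the threshold range by the analogous sum with $\tau$. After H\"older you therefore need $n$-th moment bounds for $\tau$ and $\Delta$ over the intervals $(pk,p(k+1)]$, which sit at height up to $pH_j$, i.e.\ genuinely short-interval estimates of the type $\sum_{x<m\le x+y}\tau(m)^n\ll y(\log x)^{2^n-1}$ for $y\ge x^{1/2}$ (Shiu, or~\cite{NaTe}) and the corresponding bound for $\Delta^n$ from~\cite{Ten}. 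Pointwise, $\tau(m)$ can be as large as $m^{o(1)}$, so without this input your method cannot beat the $H^{o(1)}$ loss of~\cite[Lemma~3]{Shp}, and the theorem's polylogarithmic exponent is unreachable. Your phrase ``divisor-type bounds controlling how often the residues cluster'' correctly names the obstacle but does not surmount it; this is where the paper invests Lemmas~\ref{lem:Aver tau}, \ref{lem:Aver Delta} and~\ref{lem:Sum Triple}.

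A second, more clear-cut problem is your bookkeeping for the exponent. The proposed iterated Cauchy--Schwarz over $\lambda$ with recursion $g_n=2g_{n-1}+1$ does not correspond to a valid estimate: splitting $\sum_\lambda V_1\cdots V_n\le(\sum_\lambda V_1^2)^{1/2}(\sum_\lambda V_2^2\cdots V_n^2)^{1/2}$ and iterating forces you to control $2^{n-1}$-th moments of a single $V_j$, and the $2^{n-1}$-th moment of the divisor function carries the exponent $2^{2^{n-1}}-1$, which is doubly exponential in $n$ and far too large. The correct route (and the one the paper takes, in the $p$-aspect) is a single application of H\"older with exponent $n$, reducing everything to $\sum_\lambda V_j(\lambda)^n$; the exponent $2^n-1$ then comes directly from Mardjanichvili's bound $\sum_{m\le M}\tau(m)^n\ll M(\log M)^{2^n-1}$ (the $+o(1)$ coming from the $\Delta$-moments), not from any doubling recursion. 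So the plan identifies the right landmarks but, as written, both the key analytic input and the derivation of the claimed exponent are missing or incorrect.
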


We remark that in Theorem~\ref{thm:NnaH}  
there is no restriction on the size of non-vanishing 
of $a_0$ on the right hand size of~\eqref{eq:Lin Eq a}. 

\begin{cor}
\label{cor:SnH} 
We have 
$$
 H^{n^2} \ll S_n(H) \le H^{n^2} (\log H)^{n(2^n - 1) + o(1)}. 
$$
\end{cor}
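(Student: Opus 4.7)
\textbf{Plan for Corollary~\ref{cor:SnH}.} By~\eqref{eq:S and L} the claim is equivalent to the two-sided bound
$$
H^n \ll L_n(H) \le H^n (\log H)^{2^n - 1 + o(1)},
$$
so the plan is to prove this and then raise to the $n$-th power. The upper bound on $L_n(H)$ should follow immediately from Theorem~\ref{thm:NnaH}: every solution to~\eqref{eq:Lin Eq 1}, even after dropping the coprimality conditions (which can only inflate the count), is a solution to~\eqref{eq:Lin Eq a} with coefficient vector $\vec{a} = (1, 1, \ldots, 1) \in \Z^{n+1}$ and with boxes $\fB_0 = [1, H]^n$ in the form~\eqref{eq:box B0} and $\fB = [0, H-1]^n$ in the form~\eqref{eq:box B} (take $B_j = -1$ and $H_j = H$). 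Since $|a_i| = 1 \le \exp(H^{O(1)})$, Theorem~\ref{thm:NnaH} applies with $H_1 = \cdots = H_n = H$ and yields the upper bound on $L_n(H)$ directly.

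For the lower bound $L_n(H) \gg H^n$, I would construct the required tuples explicitly. For each integer $r \in [H/2, H]$, consider ordered tuples $(s_1, \ldots, s_n) \in \Z_{\ge 0}^n$ with $s_1 + \cdots + s_n = r$, each $s_j < r$, $1 \le s_1 \le r/2$, and $\gcd(s_1, r) = 1$. Setting $\alpha_j = s_j/r$ produces an $n$-tuple in $\cF(H) \cap [0,1)$ summing to $1$. The restriction $s_1 \le r/2$ ensures $r - s_1 \ge r/2$, so the number of ways to distribute $r - s_1$ among $s_2, \ldots, s_n$ is $\binom{r - s_1 + n - 2}{n-2} \gg r^{n-2}$, while the coprimality condition leaves $\gg \phi(r)$ admissible values of $s_1$ and pins the reduced denominator of $\alpha_1$ to equal $r$. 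This last fact guarantees disjointness of the tuples across different values of $r$. Summing $\gg \phi(r)\, r^{n-2}$ over $r \in [H/2, H]$ and using $\sum_{r \le H} \phi(r) \asymp H^2$ yields $L_n(H) \gg H^{n-2} \cdot H^2 = H^n$, and hence $S_n(H) \gg H^{n^2}$.

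The main obstacle is the double-counting issue in the lower bound. Fixing a single common denominator (say $r = H$) yields only $\sim H^{n-1}$ tuples, so one must vary $r$; but without a distinguishing device the same reduced Farey-fraction tuple would be produced by many values of $r$, namely every multiple of its lowest common denominator lying in $[H/2, H]$. The device of forcing $\gcd(s_1, r) = 1$ pins down the reduced denominator of $\alpha_1$ and removes this overlap at no cost to the order of magnitude, which is what makes the construction work.
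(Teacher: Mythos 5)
Your proposal is correct and follows the paper's strategy: reduce to $L_n(H)$ via~\eqref{eq:S and L}, get the upper bound by feeding the coprimality-free count into Theorem~\ref{thm:NnaH} (your choice $B_j=-1$, $\fB=[0,H-1]^n$ is a clean way to absorb the values $s_j=0$, which the paper instead handles by a brief inductive remark), and get the lower bound by an explicit common-denominator construction. The only genuine difference is inside the lower bound: the paper imposes $\gcd(s_1\cdots s_{n-1},r)=1$ on all of the first $n-1$ numerators, obtains $\gg\varphi(r)^{n-1}$ tuples for each $r$, and then needs Lemma~\ref{lem:EulerMoment} (a H{\"o}lder argument) to show $\sum_{r\le H}\varphi(r)^{n-1}\gg H^n$; you instead impose coprimality only on $s_1$ -- which already pins the reduced denominator of $\alpha_1$ to $r$ and gives the needed injectivity across different $r$ -- and let $s_2,\ldots,s_n$ range over all compositions of $r-s_1$, giving $\gg\varphi(r)\,r^{n-2}$ per $r$ and requiring only the classical $\sum_{r\le H}\varphi(r)\asymp H^2$. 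Your variant is slightly more economical since it dispenses with Lemma~\ref{lem:EulerMoment}; both yield $L_n(H)\gg H^n$ (for $n\ge 2$, which is implicitly assumed in either argument).
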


We derive these results via modular reduction of the 
equation~\eqref{eq:Lin Eq a}
modulo an appropriately chosen prime $p$. In turn, to estimate the number of solutions
of the corresponding congruence, we use a new  bound of double exponential sums, 
which slightly improves~\cite[Lemma~3]{Shp}. We present this result in 
a larger generality than is need for proving Theorem~\ref{thm:NnaH} as
we believe it may be of independent interest. 

Furthermore, since there is a gap between upper and lower bounds of 
Corollary~\ref{cor:SnH} it is natural to do some numerical experiments 
and try to understand the asymptotic behaviour of  $S_n(H)$ as $H \to \infty$.
Thus, it is interesting to design an efficient algorithm to compute $S_n(H)$. 
Since by~\eqref{eq:S and L},  it is enough to compute $L_n(H)$
we see that one can do this via the following naive algorithm:
for each choice of $n-1$ positive integers $r_i \le H$, $i = 1, \ldots, n-1$,  then  each choice of 
$n-1$ nonnegative integers $s_i < r_i$, with $\gcd(s_i,r_i)=1$,  
$i = 1, \ldots, n-1$, check whether
$$
1- s_1/r_1 - \ldots - s_{n-1}/r_{n-1} \in \cF(H).
$$ 
This obviously gives an algorithm of complexity $H^{2n-2 + o(1)}$. 
We now show that one can compute $S_n(H)$ faster.

\begin{theorem}
\label{thm:SnH Alg} 
There is a deterministic algorithm to compute $S_n(H)$ 
 of complexity $H^{3n/2-1 + o(1)}$. 
\end{theorem}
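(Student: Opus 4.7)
Since $S_n(H) = L_n(H)^n$ by~\eqref{eq:S and L}, it suffices to compute $L_n(H)$ in time $H^{3n/2-1+o(1)}$. My plan is a deterministic meet-in-the-middle algorithm applied to equation~\eqref{eq:Lin Eq 1}, exploiting a precomputed sorted table of Farey fractions for fast membership testing of the last fraction.

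The preparatory step enumerates and sorts $\cF(H) \cap [0,1)$ by iterating over pairs $(r,s)$ with $1 \le r \le H$, $0 \le s < r$, and $\gcd(r,s)=1$; this takes $H^{2+o(1)}$ operations and yields a data structure supporting $O(\log H)$-time membership queries and fast lookup of the reduced form of a given rational.

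The main step enumerates the first $n-1$ denominators $r_1,\ldots,r_{n-1}$ (cost $H^{n-1}$) and, for each such tuple, applies meet-in-the-middle to the corresponding numerators $s_1,\ldots,s_{n-1}$. Partition $\{1,\ldots,n-1\}$ into $A$ and $B$ of sizes $\lceil (n-1)/2 \rceil$ and $\lfloor (n-1)/2 \rfloor$; enumerate the restricted numerators $(s_j)_{j \in A}$ with $0 \le s_j < r_j$ and $\gcd(s_j, r_j) = 1$, compute the partial sums $\sigma_A = \sum_{j \in A} s_j/r_j$, and sort them with multiplicities, proceeding likewise for side $B$. Then sort-and-match the two sides against the Farey table: each pair $(\sigma_A,\sigma_B)$ determines the remainder $\gamma = 1 - \sigma_A - \sigma_B$, which is tested for membership in $\cF(H) \cap [0,1)$ via the precomputed table, and $(r_n,s_n)$ is read off from the reduced form of $\gamma$ whenever valid. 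A careful balance of the sort-and-match step yields a per-denominator cost of $H^{\lceil (n-1)/2 \rceil + o(1)}$, so the total running time is $H^{n-1+\lceil (n-1)/2 \rceil+o(1)}$, which equals $H^{3n/2-1+o(1)}$ for $n$ even and $H^{3(n-1)/2+o(1)} \le H^{3n/2-1+o(1)}$ for $n$ odd. Since each valid $(r_1,\ldots,r_{n-1},s_1,\ldots,s_{n-1})$ gives rise to exactly one solution of~\eqref{eq:Lin Eq 1}, no double-counting occurs, and $L_n(H)$ is computed exactly; then $S_n(H) = L_n(H)^n$ follows with an additional $(\log H)^{O(1)}$ arithmetic cost.

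The main technical obstacle is the bit-complexity of the rational arithmetic: the partial sums $\sigma_A, \sigma_B$ can have reduced denominators as large as $H^{\lceil (n-1)/2 \rceil}$, and hence $O(n \log H)$-bit representations, but all sort, compare, and binary-search routines run in $(\log H)^{O(1)}$ time per primitive operation, which is absorbed in the $o(1)$ in the exponent. A secondary concern is efficiently enforcing the coprimality conditions $\gcd(s_j, r_j) = 1$ during the MITM enumeration of numerators, which is handled by iterating only over the reduced residues modulo each $r_j$ read from the precomputed Farey list rather than over all $s_j \in [0, r_j)$ followed by filtering.
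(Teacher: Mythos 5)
Your reduction to computing $L_n(H)$ is fine, but the meet-in-the-middle step does not deliver the claimed complexity, and this is a genuine gap rather than a presentational one. Once the denominators $r_1,\ldots,r_{n-1}$ are fixed, your two sides produce roughly $H^{\lceil (n-1)/2\rceil}$ and $H^{\lfloor (n-1)/2\rfloor}$ partial sums, but your matching rule requires, for each \emph{pair} $(\sigma_A,\sigma_B)$, a membership test of $1-\sigma_A-\sigma_B$ in $\cF(H)$. The admissible target for $\sigma_A+\sigma_B$ is not a single fixed value but any of the $\asymp H^2$ values $1-\gamma$ with $\gamma\in\cF(H)\cap[0,1)$, so sorting the two lists does not let you avoid touching essentially all $\asymp H^{n-1}$ pairs: each pair generates a distinct query, and there is no fixed target against which a two-pointer scan runs in time linear in the list lengths. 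Hence the per-denominator cost is $H^{n-1+o(1)}$, not $H^{\lceil(n-1)/2\rceil+o(1)}$, and the total is $H^{2n-2+o(1)}$ --- exactly the naive bound you set out to beat. Folding the Farey table into one side (replacing side $B$ by the multiset of values $\sigma_B+\gamma$) produces a list of size about $H^{\lfloor(n-1)/2\rfloor+2}$ and a total of order $H^{\,n+1+\lfloor(n-1)/2\rfloor+o(1)}$, which still exceeds $H^{3n/2-1}$. The sentence ``a careful balance of the sort-and-match step yields a per-denominator cost of $H^{\lceil (n-1)/2\rceil+o(1)}$'' is precisely where the proof breaks.

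The paper's argument is entirely different and is where the exponent $3n/2-1$ actually comes from: it does not accelerate the numerator search at all, but prunes the denominators. Clearing denominators in~\eqref{eq:Lin Eq 1} and using $\gcd(r_j,s_j)=1$ forces $r_j^2\mid r_1\cdots r_n$ for every $j$, i.e.\ the congruence~\eqref{eq:lcm}; by Karatsuba's bound (Lemma~\ref{lem:lcm div}) only $O(H^{n/2+o(1)})$ denominator vectors survive, and they can be listed in time $H^{n+o(1)}$. Brute-forcing the $O(H^{n-1})$ numerator choices over the surviving tuples then gives $H^{3n/2-1+o(1)}$. If you want to rescue a meet-in-the-middle idea, you would have to split the \emph{fractions} (denominators included) into two halves and match two sorted lists against the single fixed target $1$; that is a different algorithm from the one you wrote and would need its own analysis.
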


\section{Preliminaries}

\subsection{Background on totients}

Let $\varphi(r)$ denote the Euler function.

We  need the following well-known consequence of the sieve of Eratosthenes.

\begin{lemma}
\label{lem:erat} For any integers $r \ge \ell\ge 1$,
$$
\sum_{\substack{s=1 \\ \gcd(s,r)=1}}^\ell 1 = \frac{\varphi(r)}{r}\ell
+ O(r^{o(1)}). 
$$
\end{lemma}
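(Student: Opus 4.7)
The plan is to detect the coprimality condition via Möbius inversion, which is exactly the analytic content of the sieve of Eratosthenes. Starting from the identity $\sum_{d \mid m} \mu(d) = \mathbf{1}_{m=1}$ applied to $m = \gcd(s,r)$, I would swap the order of summation to get
$$
\sum_{\substack{s=1 \\ \gcd(s,r)=1}}^\ell 1 \;=\; \sum_{d \mid r} \mu(d) \sum_{\substack{1 \le s \le \ell \\ d \mid s}} 1 \;=\; \sum_{d \mid r} \mu(d)\left\lfloor \frac{\ell}{d}\right\rfloor.
$$

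Next I would replace each floor by $\ell/d$, incurring an error of size $O(1)$ per divisor. The main term becomes
$$
\ell \sum_{d \mid r} \frac{\mu(d)}{d} \;=\; \ell \cdot \frac{\varphi(r)}{r},
$$
where the last equality is the standard Euler-product evaluation of $\varphi(r)/r$. The accumulated error is bounded by $\sum_{d \mid r} |\mu(d)| = 2^{\omega(r)}$, where $\omega(r)$ denotes the number of distinct prime divisors of $r$.

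Finally, I would invoke the classical estimate $2^{\omega(r)} \le \tau(r) = r^{o(1)}$, where $\tau$ is the divisor function, to turn the error into the claimed $O(r^{o(1)})$. There is no real obstacle in this argument: it is bookkeeping around Möbius inversion together with a standard divisor bound, and it uses no property of the range other than that $\ell \ge 1$.
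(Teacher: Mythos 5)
Your proposal is correct and follows essentially the same route as the paper: Möbius detection of the coprimality condition, interchange of summation, the identity $\sum_{d\mid r}\mu(d)/d=\varphi(r)/r$, and the error term $2^{\omega(r)}=r^{o(1)}$ (the paper justifies this last step via $\omega(r)!\le r$ rather than via $2^{\omega(r)}\le\tau(r)$, but this is an immaterial difference).
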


\begin{proof}  For an integer $d \ge 1$ we use $\mu(d)$ to denote the M\"obius function. 
We recall that $\mu(1) = 1$, $\mu(d) = 0$ if $d \ge 2$ is not square-free,  and $\mu(d) = (-1)^{\omega(d)}$  otherwise, where $\omega(d)$ is the number of prime
divisors of $d$.

Now, using the M\"obius function $\mu(d)$ over the divisors of
$r$ to detect
the co-primality condition and interchanging the order of summation, we
obtain  
$$
\sum_{\substack{s=1 \\ \gcd(s,r)=1}}^\ell 1 = \sum_{d\mid r}\mu(d)
\fl{\frac{\ell}{d}} 
 =
\ell\sum_{d\mid r}\frac{\mu(d)}{d}+O\left(\sum_{d\mid r}|\mu(d)|\right).
$$

We now  use the well-known  identity
$$
\sum_{d \mid r}\frac{\mu{(d)}}{d}=\frac{\varphi{(r)}}{r},
$$
see~\cite[Section~16.3]{HaWr} and 
also that 
and that
$$
\sum_{d \mid r}  |\mu(d)| = 2^{\omega(r)},
$$
see~\cite[Theorem~264]{HaWr}, which yield 
$$
\sum_{\substack{s=1 \\ \gcd(s,r)=1}}^\ell 1  =  \frac{\varphi(r)}{r}\ell + O\(2^{\omega(r)}\).
$$
Since obviously $\omega(r)! \le r$, the result now follows immediately.
\end{proof}

One can certainly obtain a much more precise version of the following 
statement, which we present in a rather crude form that is, however, 
sufficient for our applications. 

\begin{lemma}
\label{lem:EulerMoment} For any  $n \ge 1$ we have 
$$
\sum_{r=1}^H  \varphi(r)^{n-1} \gg H^{n}.
$$
\end{lemma}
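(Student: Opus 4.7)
The plan is to deduce the bound from the classical Mertens-type estimate
$\sum_{r=1}^{H} \varphi(r) = \frac{3}{\pi^2} H^2 + O(H\log H)$,
which in particular gives $\sum_{r=1}^{H} \varphi(r) \gg H^2$.
This reduces everything to an elementary convexity argument.

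The cases $n=1$ (where the sum is just $H$) and $n=2$ (which is the classical estimate itself) are immediate, so I would focus on $n \ge 3$.  For those, the key observation is that the function $x \mapsto x^{n-1}$ is convex on $[0,\infty)$, so Jensen's inequality applied to the uniform measure on $\{1,\ldots,H\}$ yields
$$
\frac{1}{H}\sum_{r=1}^{H} \varphi(r)^{n-1} \; \ge \; \Bigl(\frac{1}{H}\sum_{r=1}^{H} \varphi(r)\Bigr)^{n-1}.
$$
Substituting the classical lower bound $\sum_{r=1}^{H} \varphi(r) \gg H^2$ into the right-hand side gives
$$
\sum_{r=1}^{H} \varphi(r)^{n-1} \; \gg \; H \cdot \bigl(H\bigr)^{n-1} \; = \; H^{n},
$$
as desired.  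Equivalently, one could phrase the same step as Hölder's inequality with exponents $n-1$ and $(n-1)/(n-2)$ applied to the product $\varphi(r)\cdot 1$, which delivers the identical estimate.

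There is no genuine obstacle here: the only ingredient beyond arithmetic is the well-known asymptotic for the summatory function of $\varphi$, and the passage from $n=2$ to general $n$ is pure convexity.  An alternative route, which I would mention only if a fully self-contained proof were desired, is to restrict the sum to the positive-density set of $r \in [H/2,H]$ having no prime factor below a fixed large constant $C$, for which $\varphi(r) \ge c_C\, r$ with $c_C = \prod_{p \le C}(1-1/p)$; summing $r^{n-1}$ over this set trivially gives $\gg H^n$. The Jensen argument is cleaner, however, and it is the one I would record.
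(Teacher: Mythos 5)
Your proof is correct and is essentially the paper's own argument: the paper applies H\"older's inequality in the form $\bigl(\sum_{r\le H}\varphi(r)\bigr)^{n-1}\le H^{n-2}\sum_{r\le H}\varphi(r)^{n-1}$ together with the classical asymptotic $\sum_{r\le H}\varphi(r)=(3/\pi^2+o(1))H^2$, which is exactly your Jensen/convexity step in a different notation (as you yourself note). No gaps.
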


\begin{proof}  By the   H{\"o}lder inequality, 
$$
\(\sum_{r=1}^H  \varphi(r)\)^{n-1} \le H^{n-2} \sum_{r=1}^H  \varphi(r)^{n-1}.
$$
Using the classical asymptotic formula
$$
\sum_{r=1}^H  \varphi(r) = \(\frac{3}{\pi^2}+o(1)\) H^2, 
$$
see~\cite[Theorem~330]{HaWr},
we conclude the proof. 
\end{proof}

\subsection{Background on divisors}

For an integer $m\ge 1$, we use  $\tau(m)$ to denote  the divisor 
function
$$
\tau(m) = \#\{d \in \Z~:~d\ge 1, \ d\mid m\}
$$
 and $\Delta(m)$ to denote  the Hooley 
function 
$$
\Delta(m) = \max_{u \ge 0} \#\{d \in \Z~:~u < d \le eu, \ d\mid m \}.
$$

We need upper bounds on the average values of $\tau(m)$ and $\Delta(m)$, 
where $m$ runs through terms of arithmetic progressions indexed by prime numbers. 
We derive these bounds from very general results of Nair and Tenenbaum~\cite{NaTe}.

We start with the $\tau(m)$.

\begin{lemma}
\label{lem:Aver tau} For any fixed real  $\varepsilon > 0$ and integer $n \ge 1$, 
for positive integers $a$, $m$ and $M \ge \max\{a,m^{\varepsilon}\}$, 
we have 
$$
\sum_{\substack{M \le p \le 2M \\p~\mathrm{prime}}} \tau(a+pm)^n 
\ll M (\log M)^{2^n - 2}.
$$
\end{lemma}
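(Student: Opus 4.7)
The plan is to apply the theorem of Nair and Tenenbaum \cite{NaTe} to the non-negative multiplicative function $F(k) = \tau(k)^n$, evaluated on values of the linear polynomial $f(T) = a + mT$ at primes $p \in [M,2M]$. The function $F$ is submultiplicative and satisfies $F(k) = O_\delta(k^\delta)$ for every $\delta > 0$, so it falls well within the class of functions treated in \cite{NaTe}. The key additional feature I will use is the version of the Nair--Tenenbaum estimate that is sensitive to the extra saving obtained by restricting the argument to primes.

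The bound I would invoke has the shape
$$
\sum_{\substack{M \le p \le 2M \\ p\text{ prime}}} F(a+pm) \ll \frac{M}{\log M}\prod_{q \le 2M}\left(1 + \frac{F(q) - 1}{q}\right),
$$
the factor $M/\log M$ arising from the prime restriction (essentially a Brun--Titchmarsh input), and the Euler product reflecting the average multiplicative structure of the values $f(p)$. The hypothesis $M \ge a$ guarantees that $a + pm$ lies in a range of size $M^{O(1)}$, so that primes beyond $2M$ contribute only a bounded factor to the product and may be absorbed into the implied constant.

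Since $F(q) = \tau(q)^n = 2^n$ for every prime $q$, Mertens' theorem yields
$$
\prod_{q \le 2M}\left(1 + \frac{2^n - 1}{q}\right) \asymp (\log M)^{2^n - 1}.
$$
Combining this with the prefactor $M/(\log M)$ gives precisely the claimed bound $M(\log M)^{2^n - 2}$.

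The main obstacle is not the computation itself but the bookkeeping around uniformity: I must invoke a version of the Nair--Tenenbaum inequality that allows the coefficients $a$ and $m$ of the linear form to vary with $M$, with the non-degeneracy condition expressed through $M \ge m^\varepsilon$. This latter condition is the standard one which ensures that the prime divisors of $m$ (which can be as large as $M^{1/\varepsilon}$) contribute only an $O(1)$ factor to the local part of the Euler product, so that the product over $q \le 2M$ on the right captures the correct order of magnitude. Once the applicable statement from \cite{NaTe} is identified, the remaining arithmetic is the routine Mertens--type evaluation sketched above.
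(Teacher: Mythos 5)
Your proposal is correct and follows essentially the same route as the paper: both invoke the Nair--Tenenbaum theorem for the submultiplicative function $\tau(\cdot)^n$ evaluated at $a+pm$ over primes $p\in[M,2M]$, obtaining the prefactor $M/\log M$, and then evaluate the local average by a Mertens-type computation. The only cosmetic difference is that you write the local factor as the Euler product $\prod_{q\le 2M}\bigl(1+(2^n-1)/q\bigr)\asymp(\log M)^{2^n-1}$, whereas the paper uses the equivalent form $\prod_{p\le M}(1-1/p)\sum_{m\le M}\tau(m)^n/m$ evaluated via Mardjanichvili's bound and partial summation; both give $M(\log M)^{2^n-2}$.
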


\begin{proof}  We apply~\cite[Theorem~3]{NaTe} (taken with the polynomials
$Q(X) = mX +a $ and $x=y=M$), and derive
$$
\sum_{\substack{M \le p \le 2M \\p~\mathrm{prime}}} \tau(a+pm)^n 
\ll \frac{M}{\log M} \prod_{\substack{p \le M \\p~\mathrm{prime}}}\(1-\frac{1}{p}\)
\sum_{m \le M} \frac{\tau(m)^n}{m}, 
$$
where the implied constant depends only on  $\varepsilon$ and $n$.
Using the Mertens formula for the product over primes, and 
also the classical bound of Mardjanichvili~\cite{Mard}
\begin{equation}
\label{eq:Mardj}
\sum_{m \le M} \tau(m)^n \ll M (\log M)^{2^n - 1}
\end{equation}
(combined with partial summation), we easily derive the result.
\end{proof}

For $\Delta(m)$ we obtain a slightly stronger bound.

\begin{lemma}
\label{lem:Aver Delta} For any fixed real  $\varepsilon > 0$ and integer $n \ge 1$, 
for positive integers $a$, $m$ and $M \ge \max\{a,m^{\varepsilon}\}$, 
we have 
$$
\sum_{\substack{M \le p \le 2M \\p~\mathrm{prime}}} \Delta(a+pm)^n 
\ll  M (\log M)^{2^n - n-2 + o(1)}.
$$
\end{lemma}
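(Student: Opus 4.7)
The plan is to mirror the proof of Lemma~\ref{lem:Aver tau} line by line, replacing the divisor function $\tau$ with the Hooley function $\Delta$, and, crucially, replacing Mardjanichvili's moment bound~\eqref{eq:Mardj} by a sharper moment bound for $\Delta^n$. Specifically, I apply~\cite[Theorem~3]{NaTe} with the polynomial $Q(X) = mX + a$ and with $f(k) = \Delta(k)^n$ playing the role previously played by $\tau(k)^n$. Although $\Delta$ is not multiplicative, it satisfies the standard submultiplicative-type inequality $\Delta(kl) \le \tau(l)\Delta(k)$ and is pointwise dominated by $\tau$, which is enough to verify the local hypotheses in the Nair--Tenenbaum estimate. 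This gives
$$
\sum_{\substack{M \le p \le 2M \\ p~\mathrm{prime}}} \Delta(a+pm)^n
\ll \frac{M}{\log M} \prod_{\substack{p \le M \\ p~\mathrm{prime}}}\(1-\frac{1}{p}\)
\sum_{k \le M} \frac{\Delta(k)^n}{k},
$$
with implied constant depending only on $\varepsilon$ and $n$.

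The essential new ingredient is the mean-value estimate
$$
\sum_{k \le M} \Delta(k)^n \ll M (\log M)^{2^n - n - 1 + o(1)},
$$
which is the exact counterpart of~\eqref{eq:Mardj} for $\Delta$ in place of $\tau$, and which encodes the saving of a factor $(\log M)^{n + o(1)}$ that the Hooley function enjoys over the divisor function on average. Such moment bounds for $\Delta$ are classical and can be extracted from the machinery developed by Hall and Tenenbaum for the ``localized divisor function''; the precise exponent $2^n - n - 1$ is what these techniques are known to produce.

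Combining the two ingredients is now routine. Partial summation converts the moment bound into
$$
\sum_{k \le M} \frac{\Delta(k)^n}{k} \ll (\log M)^{2^n - n - 1 + o(1)},
$$
Mertens' formula supplies $\prod_{p \le M}(1 - 1/p) \ll 1/\log M$, and multiplying the resulting factors together produces the claimed estimate $M (\log M)^{2^n - n - 2 + o(1)}$. The main obstacle in the whole argument is really only the step of locating, and correctly quoting, a moment bound for $\Delta^n$ with exactly the exponent $2^n - n - 1 + o(1)$; once this has been pinned down, the outer structure of the proof is identical to, and only marginally more involved than, the proof of Lemma~\ref{lem:Aver tau}.
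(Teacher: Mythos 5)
Your proof follows the paper's argument exactly: reduce via the Nair--Tenenbaum theorem to the mean value of $\Delta^n$, then invoke the $n$-th moment bound $\sum_{k\le M}\Delta(k)^n \ll M(\log M)^{2^n-n-1+o(1)}$, which is precisely what the paper cites (Tenenbaum, \emph{Invent.\ Math.} 99 (1990), Lemma~2.2) in place of Mardjanichvili's bound. One small arithmetic slip: partial summation from that moment bound gives $\sum_{k\le M}\Delta(k)^n/k \ll (\log M)^{2^n-n+o(1)}$, one more power of $\log$ than in your intermediate display, and it is this corrected value that yields the stated exponent $2^n-n-2$ after multiplying by $M/(\log M)^2$.
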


\begin{proof}  As in the proof of Lemma~\ref{lem:Aver tau}, 
by~\cite[Theorem~3]{NaTe} we have 
$$
\sum_{\substack{M \le p \le 2M \\p~\mathrm{prime}}} \Delta(a+pm)^n 
\ll \frac{M}{\log M} \prod_{\substack{p \le M \\p~\mathrm{prime}}}\(1-\frac{1}{p}\)
\sum_{m \le M} \frac{\Delta(m)^n}{m}, 
$$
where the implied constant depends only on  $\varepsilon$ and $n$.
Now,  using~\cite[Lemma~2.2]{Ten} instead of~\eqref{eq:Mardj} we conclude the proof.  
\end{proof}

We remark that using the full power of~\cite[Theorem~3]{NaTe} 
and~\cite[Lemma~2.2]{Ten}, one can replace $o(1)$ in the power of $\log M$
in Lemma~\ref{lem:Aver Delta} by a more precise and explicit 
function of $M$.

 \subsection{Product and least common multiples of several integers}
 
We need the following result of Karatsuba~\cite{Kar}

For an integer $n \ge 1$ and real $R_1, \ldots, R_n$, let
 $J_n(R_1, \ldots, R_n)$ denote the number of solutions to congruence 
\begin{equation}
\label{eq:lcm}
r_1\ldots r_n \equiv 0 \pmod {\lcm[r_1^2\ldots r_n^2]}
\end{equation}
in positive integer $r_j \le R_j$, $j=1, \ldots, n$.

\begin{lemma}
\label{lem:lcm div} 
For any real $R_1, \ldots, R_n \ge 2$, we have 
$$
J_n(R_1, \ldots, R_n) \ll R^{1/2} (\log R)^{n^2},
$$
where $R = R_1 \ldots R_n$. 
\end{lemma}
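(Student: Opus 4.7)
My first step is to note that $\lcm[r_1^2, \ldots, r_n^2] = L^2$ for $L = \lcm(r_1, \ldots, r_n)$, by comparing $p$-adic valuations prime by prime. Hence the congruence \eqref{eq:lcm} is equivalent to $L^2 \mid r_1 \cdots r_n$. Substituting $d_i = L/r_i$ (each $d_i$ a positive divisor of $L$) converts the problem into counting tuples $(L, d_1, \ldots, d_n)$ satisfying $d_i \mid L$, $d_i \ge L/R_i$, and $\prod_{i=1}^n d_i \mid L^{n-2}$.

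I would then sum over $L$, bounding the number of admissible divisor tuples for each $L$. In the regime $L \ge \max_i R_i$ the product of the lower bounds $d_i \ge L/R_i$ combines with $\prod_i d_i \le L^{n-2}$ to force $L^n/R \le L^{n-2}$, i.e.\ $L \le R^{1/2}$. In intermediate regimes (where some but not all $R_i$ are less than $L$), an analogous slicing argument according to the set of indices $i$ for which $r_i$ strictly saturates its divisibility room controls the effective range of $L$, or else shows that for such $L$ the number of tuples is restricted by counting divisors of $L$ in dyadic intervals below $R_i$. This last step is where the Hooley function $\Delta(L)$ enters naturally, rather than the cruder $\tau(L)$.

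The main difficulty is extracting the precise polylogarithmic exponent $n^2$. A trivial count yields $\tau(L)^n$ admissible tuples per $L$, and Mardjanichvili's estimate $\sum_{L \le R^{1/2}} \tau(L)^n \ll R^{1/2}(\log R)^{2^n - 1}$ already suffices when $n \le 4$ (since then $n^2 \ge 2^n - 1$); but for $n \ge 5$ the constraint $\prod_i d_i \mid L^{n-2}$ must be genuinely exploited beyond $d_i \mid L$ alone. The natural mechanism is a multiplicative local analysis: for each prime $p$ with $\alpha = v_p(L)$, enumerate exponent tuples $(e_1, \ldots, e_n) \in \{0, \ldots, \alpha\}^n$ satisfying $\max_i e_i = \alpha$ and $\sum_i e_i \ge 2\alpha$, obtaining a sharpening of the trivial local bound $(\alpha + 1)^n$. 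Aggregating these per-prime savings via multiplicative-function averaging and summing over $L \le R^{1/2}$ should deliver the clean factor $(\log R)^{n^2}$; carrying out this aggregation without losing the polynomial exponent is the heart of Karatsuba's argument.
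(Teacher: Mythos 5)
The paper gives no proof of this lemma at all --- it is quoted from Karatsuba~\cite{Kar} --- so your proposal has to stand on its own, and as written it does not. The reduction in your first two paragraphs is correct and worth keeping: indeed $\lcm[r_1^2,\ldots,r_n^2]=L^2$ with $L=\lcm(r_1,\ldots,r_n)$, the substitution $d_i=L/r_i$ turns the congruence into $\prod_{i}d_i\mid L^{n-2}$, and combining $\prod_i d_i\ge L^n/R$ with $\prod_i d_i\le L^{n-2}$ gives $L\le R^{1/2}$ unconditionally (no case distinction on whether $L\ge\max_i R_i$ is needed, since $d_i\ge L/R_i$ always holds). Together with Mardjanichvili's bound $\sum_{L\le R^{1/2}}\tau(L)^n\ll R^{1/2}(\log R)^{2^n-1}$ this does settle $n\le 4$, as you observe.

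For $n\ge 5$ --- the only range in which the claimed exponent $n^2$ beats the trivial count --- you do not give a proof: you describe what a proof would have to accomplish and then explicitly defer to ``the heart of Karatsuba's argument.'' Moreover, the specific mechanism you sketch cannot produce the stated exponent. Counting, for a prime $p$ with $v_p(L)=\alpha=1$, the exponent tuples in $\{0,1\}^n$ with $\max_ie_i=1$ and $\sum_ie_i\ge 2$ gives exactly $2^n-n-1$ admissible tuples; so the multiplicative function $h(L)$ counting divisor tuples with $\lcm$ equal to $L$ and $L^2\mid\prod_ir_i$ has $h(p)=2^n-n-1$, and $\sum_{L\le x}h(L)$ is genuinely of order $x(\log x)^{2^n-n-2}$. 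Thus any argument that only uses $L\le R^{1/2}$ together with a per-$L$ multiplicative count is capped at $R^{1/2}(\log R)^{2^n-n-2}$, which coincides with $n^2$ at $n=5$ but exceeds it for every $n\ge 6$. To reach $n^2$ one must exploit the individual constraints $r_i\le R_i$ and the global divisibilities $r_k\mid\prod_{j\ne k}r_j$ through a genuinely different parametrization (roughly, by $O(n^2)$ divisor variables $e_{kj}\mid\gcd(r_k,r_j)$ of controlled product); your remarks about ``slicing'' and the Hooley function $\Delta$ gesture at this but are not developed to the point where one could verify they yield $(\log R)^{n^2}$. That step is the entire content of the lemma, and it is missing.
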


 \subsection{Exponential sums with ratios} 
For a prime $p$, we denote $\ep(z) = \exp(2 \pi i z/p)$.
Clearly for any $p > u,v\ge 1$ the expression $\ep(av/u)$
is correctly  defined (as $\ep(aw)$
for $w \equiv  v/u \pmod p$). 

The following result is a variation of~\cite[Lemma~3]{Shp}, 
where the additional averaging over primes allows us to replace 
$Q^{o(1)}$ with a power of $\log Q$.

\begin{lemma}
\label{lem:Sum Triple} Let $Q> U, V\ge 1$ be arbitrary integers and
let $\cC\subseteq [0,U]\times [0,V]$
be an arbitrary convex domain.   Then, 
uniformly over the  integers $a\in [1, 2Q]$, 
we have 
$$
\sum_{\substack{Q \le p \le 2Q\\p~\mathrm{prime}\\\gcd(a,p)=1}} \left|
 \sum_{(u,v) \in \cC} \ep(av/u)\right|^n \ll  (U+V)^n Q (\log Q)^{2^n- 2 + o(1) }. 
$$
\end{lemma}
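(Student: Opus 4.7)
The strategy is to follow the approach of~\cite[Lemma~3]{Shp}, which gives a pointwise bound on the inner sum in terms of divisor quantities associated with the prime $p$, and then to insert averaging over primes via Lemma~\ref{lem:Aver tau} in order to replace the $Q^{o(1)}$ factor of~\cite{Shp} by the explicit power of $\log Q$ claimed here.

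First I would fix a prime $p \in [Q, 2Q]$ with $\gcd(a, p) = 1$. Since $U < Q \le p$, every $u \in [1, U]$ is coprime to $p$, so $\ep(av/u) = \ep(v \cdot a\bar u)$ with $\bar u$ the inverse of $u$ modulo $p$. Because $\cC$ is convex, for each $u$ the set of admissible $v$ is an interval $I_u$, and the standard geometric-series estimate gives
$$
|S| \ll \sum_{u=1}^{U} \min\bigl(V,\, p/|r_u|\bigr),
$$
where $r_u \in (-p/2, p/2]$ is the unique integer with $r_u u \equiv a \pmod p$, equivalently $r_u u = a + p k$ for some integer $k$ with $|k| \ll U$. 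The number of $u \in [1, U]$ with $|r_u| \le M$ is therefore dominated by the number of factorizations of $a + p k$ with $k$ in the short range $|k| \ll MU/p + 1$, and so by $\sum_{|k| \ll MU/p + 1} \tau(|a + p k|)$.

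Second, I would substitute this divisor count into a dyadic decomposition of the bound $\sum_u \min(V, p/|r_u|)$ and run the combinatorial argument of~\cite[Lemma~3]{Shp}, which uses H\"older's inequality to pass to the $n$-th power. This produces a pointwise estimate of the form
$$
|S|^n \ll (U+V)^n (\log Q)^{o(1)} \sum_{|k| \le K} \tau(|a+pk|)^n
$$
for a bounded range $K$ independent of $p$. Summing over primes $p \in [Q, 2Q]$ coprime to $a$ and swapping the order of summation, each inner sum $\sum_p \tau(|a+pk|)^n$ is then bounded by $Q (\log Q)^{2^n - 2}$ via Lemma~\ref{lem:Aver tau}, which delivers the claimed estimate.

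The main obstacle will be the bookkeeping in the second step: the naive form $(\sum_u f_u)^n \le U^{n-1} \sum_u f_u^n$ of H\"older's inequality is far too wasteful, so one must follow the more refined packaging of~\cite{Shp}, exploiting the divisor parametrization of $\{u : |r_u| \le M\}$ at each dyadic scale, in order to produce the clean $(U+V)^n$ factor while accumulating only $(\log Q)^{o(1)}$ losses from the dyadic summation. This is what ensures that the final exponent $2^n - 2 + o(1)$ of $\log Q$ reflects only the arithmetic input from Lemma~\ref{lem:Aver tau} and is not worsened by the geometric manipulations.
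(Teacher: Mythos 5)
Your overall architecture is the right one and matches the paper's: bound the inner sum by $\sum_{u}\min(V,p/|r_u|)$ via convexity and incomplete geometric sums, parametrize the $u$ with $|r_u|$ small by writing $uz=a+pk$ and counting divisors of $|a+pk|$ for $k$ in a short range, then average over $p$ using the Nair--Tenenbaum input. However, there is a genuine gap at exactly the step you defer to ``bookkeeping'': the claimed pointwise estimate $|S|^n\ll (U+V)^n(\log Q)^{o(1)}\sum_{|k|\le K}\tau(|a+pk|)^n$ is not what the method delivers, and the divisor function $\tau$ alone is not enough. After the dyadic decomposition over the scales $e^j\le |r_u|<e^{j+1}$, $I<j\le J$ with $J-I\asymp\log Q$, the passage to the $n$-th power via H\"older over the $\asymp\log Q$ dyadic indices unavoidably costs a factor $(\log Q)^{n-1}$ (and one more $\log Q$ appears when summing $e^{-jn}K_j^n$ over $j$). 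If you then bound the count at each dyadic scale by $\sum_{|k|\le K_j}\tau(|a+pk|)$ and apply Lemma~\ref{lem:Aver tau}, you end up with exponent $2^n+n-2$, not $2^n-2+o(1)$.

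The paper's fix, which is absent from your sketch, is that at each dyadic scale the relevant quantity is not $\tau$ but the Hooley function $\Delta(m)=\max_{u\ge 0}\#\{d\mid m: u<d\le eu\}$: for fixed $k$ the number of admissible $u$ with $uz=a+pk$ and $e^j<|z|<e^{j+1}$ is a count of divisors in an interval of bounded multiplicative length. Its averaged $n$-th moment (Lemma~\ref{lem:Aver Delta}, resting on Tenenbaum's bound for $\sum_{m\le M}\Delta(m)^n/m$) is smaller than that of $\tau^n$ by a factor $(\log Q)^{n+o(1)}$, which exactly cancels the loss from H\"older over the dyadic scales; the plain $\tau$ and Lemma~\ref{lem:Aver tau} are used only for the single non-dyadic range $|r_u|<e^{I}$. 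So your plan needs to be amended to invoke $\Delta$ and Lemma~\ref{lem:Aver Delta} for the terms $T_{j,p}$; as written, the ``refined packaging'' you appeal to does not exist for $\tau$ and the final exponent would be worse by $(\log Q)^{n}$. (A minor further point: the range $K$ is not bounded --- at the top scale $K_J\asymp U$ --- though it is indeed independent of $p$ since $p\asymp Q$.)
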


\begin{proof}
Since $\cC$ is convex,  for each $v$ we there  are
 integers $V \ge W_u > V_u \ge 0$ such that 
$$
\sum_{(u,v) \in \cC} \ep(av/u) =
\sum_{u=1}^U \sum_{v=V_u+1}^{W_u} \ep(av/u).
$$
Following the proof of~\cite[Lemma~3]{Shp}, we
define 
$$I = \fl{\log(2Q/V)} \mand J = \fl{\log (2Q)}. 
$$

Furthermore, for a rational number $\alpha = v/u$ with $\gcd(u,p)=1$,
we denote by $\rho_p(\alpha)$ the unique integer $w$ with $w \equiv v/u \pmod p$
and $|w| < p/2$. 
Then~\cite[Equation~(1)]{Shp} implies 
$$
\sum_{(u,v) \in \cC} \ep(av/u)\ll V R_{p}   
+ Q \sum_{j = I+1}^J T_{j,p} e^{-j},
$$
where
\begin{equation*}
\begin{split}
&R_{p} =\# \left\{u~:~1 \le u \le U,  \  
 |\rho_p(a/u)| < e^{I} \right\},\\
&T_{j,p} =\# \left\{u~:~1 \le u \le U,  \  
e^j \le |\rho_p(a/u)| < e^{j+1} \right\}.
\end{split}
\end{equation*}

Thus, using the H{\"o}lder inequality twice, we obtain:
\begin{equation*}
\begin{split}
\left|\sum_{(u,v) \in \cC} \ep(av/u)\right|^n& \ll V^n R_{p}^n    
+ Q^n \(\sum_{j = I+1}^J T_{j,p}  e^{-j}\)^n\\
& \ll V^n R_{p}^n    
+ Q^n  (\log Q)^{n-1} \sum_{j = I+1}^J T_{j,p}^n e^{-jn}. 
\end{split}
\end{equation*}
Hence
\begin{equation}
\begin{split}
\label{eq:W prelim 1}
\sum_{\substack{Q \le p \le 2Q\\p~\mathrm{prime}\\\gcd(a,p)=1}} &\left|\sum_{(u,v) \in \cC} \ep(av/u)\right|^n\\
& \qquad  \ll V^n \fR  
+ Q^n (\log Q)^{n-1} \sum_{j = I+1}^J\fT_{j} e^{-jn}, 
\end{split}
\end{equation}
where
$$
\fR = \sum_{\substack{Q \le p \le 2Q\\p~\mathrm{prime}\\\gcd(a,p)=1}}   R_{p}^n  \mand 
\fT_j = \sum_{\substack{Q \le p \le 2Q\\p~\mathrm{prime}\\\gcd(a,p)=1}}   T_{j,p}^n  .
$$

As in~\cite{Shp},  we note that if $e^j \le |\rho_p(a/u)| < e^{j+1}$, 
then $u z \equiv a \pmod p$ for some integer $z$ with 
$e^j < |z| <   e^{j+1}$. Thus $uz = a + pk$ for some integer $k$ with 
$|k| \le  K_j$ where $K_j = \fl{e^{j+1}U/Q} +1$. 
Therefore, recalling the definitions of the $\tau(m)$ and $\Delta(m)$, 
we conclude
$$
R_{p} \le  \sum_{|k| \le K_{I-1}} \tau\(|a+pk|\) \mand 
T_{j,p}  \le \sum_{|k| \le K_j} \Delta\(|a+pk|\).
$$
Now using the H{\"o}lder inequality, 
changing the order of summation and applying Lemma~\ref{lem:Aver tau}
(which applies as $Q \gg K_J \ge |k|$) we derive
\begin{equation*} 
\begin{split}
\fR & \le  \sum_{\substack{Q \le p \le 2Q\\p~\mathrm{prime}}}  
 \(\sum_{|k| \le K_I}  \tau\(|a+pk|\)\)^n \\
 & \le K_I^{n-1}  \sum_{|k| \le K_I} \sum_{\substack{Q \le p \le 2Q\\p~\mathrm{prime}}}  
 \tau\(|a+pk|\)^n \ll  K_I^{n}  Q (\log Q)^{2^n-2}.
\end{split}
\end{equation*}
Similarly,  applying Lemma~\ref{lem:Aver Delta}, we see that
\begin{equation*} 
\begin{split}
\fT & \le  \sum_{\substack{Q \le p \le 2Q\\p~\mathrm{prime}}}  
 \(\sum_{|k| \le K_j}  \Delta\(|a+pk|\)\)^n \\
 & \le K_j^{n-1}  \sum_{|k| \le K_j} \sum_{\substack{Q \le p \le 2Q\\p~\mathrm{prime}}}  
\Delta\(|a+pk|\)^n \ll  K_j^{n}  Q (\log Q)^{2^n -n -2+o(1)}, 
\end{split}
\end{equation*}
for $j = I+1, \ldots, J$.
Substituting this bound in~\eqref{eq:W prelim 1}, yields
\begin{equation} 
\label{eq:W prelim 2}
\begin{split}
& \sum_{\substack{Q \le p \le 2Q\\p~\mathrm{prime}\\\gcd(a,p)=1}}  \left|
 \sum_{(u,v) \in \cC} \ep(av/u)\right|^n \\
 & \quad \ 
 \ll V^n   K_I^{n}  Q (\log Q)^{2^n-2}
 + Q^{n+1} (\log Q)^{2^n  -3 + o(1)} \sum_{j = I+1}^J
e^{-jn}   K_j^{n}   .
\end{split}
\end{equation}

We now have
$$
 K_I^{n} \ll  e^{In} U^nQ^{-n} + 1 \ll U^nV^{-n} +  1
$$
and also 
\begin{equation*} 
\begin{split}
\sum_{j = I+1}^J
e^{-jn} K_j^{n} & \ll 
\sum_{j = I+1}^J\(e^{jn}U^nQ^{-n} + 1\) e^{-jn}\\
& \ll JU^n/Q^n + e^{-In} 
\ll U^nQ^{-n} \log Q + V^nQ^{-n}.
\end{split}
\end{equation*}
Combining the above bounds with~\eqref{eq:W prelim 2},
after simple calculations, 
we obtain the desired result.
\end{proof}

\section{Proofs of Main Results}

\subsection{Proof of Theorem~\ref{thm:NnaH}}

We note that since 
$|a_1\ldots a_n| = \exp\(H^{O(1)}\)$, 
this product has at most $H^{O(1)}$ prime divisors. 
Hence, there is a constant $C>0$ such that for $Q = \rf{H^C}$
there is a set $\cP$ of
at least 
\begin{equation} 
\label{eq:P large}
\# \cP \ge 0.5Q/\log Q
\end{equation}
primes $p \in [Q,2Q]$ that are relatively 
prime with $a_1\ldots a_n$. We also assume that $C> n$.
In particular, 
\begin{equation} 
\label{eq:Q large}
H^C \gg Q  > H^n.
\end{equation} 

Let $M_n(\vec{a};p, \fB_0,\fB)$ be the number of solutions to the
congruence 
$$
\sum_{j=1}^n a_j \frac{s_j}{r_j} \equiv a_0 \pmod p,  \qquad  (r_1,\ldots,r_n) \in \fB_0,\ (s_1,\ldots,s_n)  \in \fB. 
$$

Using the orthogonality of exponential functions, we write 
$$
M_n(\vec{a};p, \fB_0,\fB)
  = \ssum_{\substack{(r_1,\ldots,r_n) \in \fB_0\\
(s_1,\ldots,s_n)  \in \fB}} \, \frac{1}{p}
\sum_{\lambda=0}^{p-1} \ep\(\lambda\(\sum_{j=1}^n a_j \frac{s_j}{r_j}- a_0 \)\).
$$
Changing the order of summation, gives the identity
$$
M_n(\vec{a};p, \fB_0,\fB) =  \frac{1}{p} \sum_{\lambda=0}^{p-1} 
 \ep\(-\lambda a_0\) \prod_{j=1}^n \sum_{r_j=1}^{H_j} 
\sum_{s_j = B_j+1}^{B_j+H_j} 
\ep\( \lambda a_j  s_j/r_j\).
$$
Now, the contribution from $\lambda=0$ gives the main term $(H_1\ldots H_n)^{2}/p$.
Extending the summation over $\lambda$ to all positive integers  $\lambda \le 2Q$ 
with $\gcd(\lambda,p)=1$, for every $p \in \cP$ we obtain 
\begin{equation*}
\begin{split}
 M_n(\vec{a};p&, \fB_0,\fB)\\ 
 & \le \frac{(H_1\ldots H_n)^{2}}{Q} +
 \frac{1}{Q} \sum_{\substack{\lambda=1\\ \gcd(\lambda, p)=1}}^{2Q} 
\prod_{j=1}^n
\left|\sum_{r_j=1}^{H_j} \sum_{s_j = B_j+1}^{B_j+H_j} \ep\(\lambda a_j  s_j/r_j\)\right|.
\end{split}
\end{equation*}
Hence, summing over all $p \in \cP$ 
and denoting
$$
W(\lambda)
=    \sum_{\substack{p\in \cP\\ \gcd(\lambda, p)=1}}^{2Q} 
\prod_{j=1}^n
\left|\sum_{r_j=1}^{H_j} \sum_{s_j = B_j+1}^{B_j+H_j} \ep\(\lambda a_j  s_j/r_j\)\right|.
$$
we see that
\begin{equation} 
\label{eq:M and W}
\sum_{p\in \cP}  M_n(\vec{a};p, \fB_0,\fB)\le \#\cP  \frac{(H_1\ldots H_n)^{2}}{Q}  + 
 \frac{1}{Q} \sum_{\substack{\lambda=1\\ \gcd(\lambda, p)=1}}^{2Q} W(\lambda).
\end{equation}
Using the H{\"o}lder inequality, we obtain
$$
W(\lambda)^n 
\le   \prod_{j=1}^n  \sum_{\substack{p\in \cP\\ \gcd(\lambda, p)=1}}^{2Q} 
\left|\sum_{r_j=1}^{H_j} \sum_{s_j = B_j+1}^{B_j+H_j}  \ep\(\lambda a_j  s_j/r_j\)\right|^n.
$$
We now invoke Lemma~\ref{lem:Sum Triple} and see that 
$$
W(\lambda) 
\ll H_1\ldots H_n Q (\log Q)^{2^n - 2 + o(1)},
$$
which together with~\eqref{eq:M and W} implies 
\begin{equation*}
\begin{split}
 \sum_{p\in \cP}  M_n(\vec{a}&;p, \fB_0,\fB) \ll  \#\cP  \frac{(H_1\ldots H_n)^{2}}{Q} + H_1\ldots H_n Q (\log Q)^{2^n - 2 + o(1)}.
\end{split}
\end{equation*} 
Hence, by~\eqref{eq:P large}, there is a prime $p\in \cP$
with 
$$
 M_n(\vec{a};p, \fB_0,\fB) 
\ll   \frac{(H_1\ldots H_n)^{2}}{Q} + H_1\ldots H_n  (\log Q)^{2^n - 1 + o(1)}.
$$
Recalling~\eqref{eq:Q large}, we see that the second term dominates and 
$\log Q$ can be replaced with $\log H$. Therefore,
$$
 M_n(\vec{a};p, \fB_0,\fB) 
\ll   H_1\ldots H_n  (\log H)^{2^n - 1 + o(1)}.
$$
Using the trivial bound $N_n(\vec{a}; \fB_0,\fB)  \le 
M_n(\vec{a};p, \fB_0,\fB)$,  we 
conclude the proof.

\subsection{Proof of Corollary~\ref{cor:SnH}}

The upper bound is immediate from Theorem~\ref{thm:NnaH}
and the equation~\eqref{eq:S and L} (we note that
for the typographical simplicity 
the values $s_j=0$ are excluded in Theorem~\ref{thm:NnaH}, 
but  a simple inductive argument allows us 
to include them).

To see the lower bound we note that by Lemma~\ref{lem:erat}
for any positive integer $r \le H$ and  $\ell = \fl{r/(n-1)}$ and
we can  choose positive integers $s_1, \ldots, s_{n-1} \le \ell$
with 
\begin{equation} 
\label{eq:Coprime}
\gcd(s_1  \ldots s_{n-1}, r)=1 
\end{equation} 
in
$$
\((1+o(1))\frac{\varphi(r)}{r} \ell\)^{n-1} 
\gg  \varphi(r)^{n-1}
$$
possible ways. After this we set $s_n = r - s_1 - \ldots - s_{n-1}$.
Clearly, by the co-primality condition~\eqref{eq:Coprime}, 
the vectors of rational numbers $(s_1/r, \ldots, s_n/r)$ 
obtained via the above construction 
are pairwise distinct. Hence 
$$
L_n(H) \gg \sum_{r=1}^H \varphi(r)^{n-1}
$$
Applying Lemma~\ref{lem:EulerMoment} we obtain $L_n(H) \gg H^n$ 
and the result follows from~\eqref{eq:S and L}.  

\subsection{Proof of Theorem~\ref{thm:SnH Alg}}
Clearing the denominators, we transform~\eqref{eq:Lin Eq 1}, 
into the following equation
$$
\sum_{j=1}^n s_j \frac{r_1\ldots r_n}{r_j}
 =r_1\ldots r_n . 
$$
Since for every $j =1, \ldots, n$ we have  $\gcd(r_j,s_j) = 1$, 
we see the divisibility 
$$
r_j \mid\frac{r_1\ldots r_n}{r_j}
$$
or $r_j^2 \mid  r_1\ldots r_n$,
which implies the congruence~\eqref{eq:lcm} for every 
solution to~\eqref{eq:Lin Eq 1}. 

Using standard arithmetic algorithms for computing the 
greatest common divisor and this the least common multiple, we see that we
can enumerate all vectors of positive integers
$(r_1, \ldots, r_n)$ for which~\eqref{eq:Lin Eq 1}
has a solution for some $s_1, \ldots, s_n$ in time $O\(H^{n+o(1)}\)$.
By Lemma~\ref{lem:lcm div}, the resulting list contains 
$O(H^{n/2+o(1)})$ vectors. 

Now for each $(r_1, \ldots, r_n)$
we choose $0 \le s_j< r_j$, $j=1, \ldots, n-1$ in $O\(H^{n-1}\)$ ways, 
define $s_n$ by the equation~\eqref{eq:Lin Eq 1} and check whether 
other conditions in~\eqref{eq:Lin Eq 1}. This leads to the desired 
algorithm. 

\section{Comments}

We remark that estimating $L_n(H)$ by the number of solutions to 
an equations of the type~\eqref{eq:Lin Eq a} (that is, without the 
co-primality condition) can lead to additional logarithmic losses.
This effect  has been mentioned in~\cite{BBS} and can also be easily 
seen for $n=2$. 

We recall that   a square $n \times n$ matrix $A$ is
called {\it doubly stochastic\/} if both $A$ and the transposed 
matrix $A^T$ are stochastic. 

We now define $\SS_n(H)$ as the number of doubly stochastic 
$n \times n$ matrices with rational entries from $\cF(H)$.
We have the following trivial bounds
\begin{equation}
\label{eq:SS}
H^{n^2 -2n + 2} \ll \SS_n(H) \le  H^{n^2-n} (\log H)^{(n-1)(2^n - 1) + o(1)}.
\end{equation}
Indeed the upper bound in~\eqref{eq:SS} follows immediately 
from Theorem~\ref{thm:NnaH} and the observation that if the 
top $n-1$ rows of a doubly stochastic matrix are fixed
then the last row is uniquely defined. To get a lower bound on $\SS_n(H)$,
we fix a positive integer $r$ and  for each 
$i=1, \ldots, n-1$ we choose the elements $\alpha_{i,j}= s_{i,j}/r$, 
$i=1, \ldots, n-1$, $j=1, \ldots, n$, of the first $n-1$ rows
as in the proof of Corollary~\ref{cor:SnH} (with respect to the same $r$). 
After this we also define
$$
\alpha_{n,j} = 1 - \sum_{i=1}^{n-1} \alpha_{i,j}, \qquad 
j=1, \ldots, n.
$$
It only remains to note that 
$$
\sum_{j=1}^{n} \alpha_{n,j} =
n - \sum_{j=1}^{n} \sum_{i=1}^{n-1} \alpha_{i,j}
= n -\sum_{i=1}^{n-1}  \sum_{j=1}^{n}  \alpha_{i,j} = 1.
$$
Now, simple counting yields the lower bound in~\eqref{eq:SS}. 


%

%
%

%
%
\section*{Acknowledgement}

The author is grateful to Valentin Blomer and Tim Browning for very 
useful discussions. 
 This work was  supported in part by ARC Grant~DP140100118.

\end{document}